\newtheorem{lemma}{Lemma}
\newtheorem{proposition}{Proposition}
\newtheorem{theorem}{Theorem}
\newtheorem{corollary}{Corollary}
\newtheorem{conjecture}{Conjecture}
\theoremstyle{remark}
\def\ds{\displaystyle}
\def\Z{\mathbb{Z}}
\def\zero{{\bf 0}}
\def\x{\boldsymbol{x}}
\def\y{\boldsymbol{y}}
\def\z{\boldsymbol{z}}
\def\alt{\operatorname{alt}}
\author{Meysam Alishahi}
\address{M. Alishahi,
School of Mathematical Sciences,
Shahrood University of Technology, Shahrood, Iran}
\email{meysam\_alishahi@shahroodut.ac.ir}
\author{Fr\'ed\'eric Meunier}
\address{F. Meunier, Universit\'e Paris Est, CERMICS, 77455 Marne-la-Vall\'ee CEDEX, France}
\email{frederic.meunier@enpc.fr}
\title{Fair splitting of colored paths}
\keywords{Tucker's lemma; splitting necklace; independent sets}
\begin{document}

\maketitle

\begin{abstract}
This paper deals with two problems about  splitting fairly a path with colored vertices, where ``fairly'' means that each part contains almost the same amount of vertices in each color. 

Our first result states that it is possible to remove one vertex per color from a path with colored vertices so that the remaining vertices can be fairly split into two independent sets of the path. 
It implies in particular a conjecture of Ron Aharoni and coauthors. The proof uses the octahedral Tucker lemma.

Our second result is the proof of a particular case of a conjecture of D{\"o}m{\"o}t{\"o}r P{\'a}lv{\"o}lgyi about fair splittings of necklaces for which one can decide which thieves are advantaged. The proof is based on a rounding technique introduced by Noga Alon and coauthors to prove the discrete splitting necklace theorem from the continuous one.
\end{abstract}

\section{Introduction}

This paper is about fair splittings of paths with colored vertices. ``Fair'' means throughout the paper that for each color $j$, the numbers of vertices of color $j$ in each part differ by at most one.

Given a path $P$ whose vertex set is partitioned into $m$ subsets $V_1,\ldots,V_m$, Aharoni et al.~\cite[Conjecture 1.6]{aharoni2016fair} conjectured that there always exists an independent set $S$ of $P$ such that $|S\cap V_j|\geq |V_j|/2-1$, with strict inequality holding for at least $m/2$ subsets $V_j$. 
 We prove that we can actually remove one vertex from each $V_j$ so that the remaining vertices can be fairly split into two independent sets of $P$ of almost same size. 

 \begin{theorem}\label{main}
Given a path $P$ whose vertex set is partitioned into $m$ subsets $V_1,\ldots,V_m$, there always exist two disjoint independent sets $S_1$ and $S_2$ covering all vertices but one in each $V_j$, with sizes differing by at most one, and satisfying for each $i\in\{1,2\}$
$$ |S_i\cap V_j| \geq  \frac{|V_j|} 2 -1\qquad\mbox{for all $j\in[m]$.}$$
\end{theorem}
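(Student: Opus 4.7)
The proof is by contradiction via the octahedral Tucker lemma (OTL). Set $n := |V(P)|$ and list $V(P) = \{v_1, \ldots, v_n\}$ in path order. To each sign vector $\x \in \{-1, 0, +1\}^n \setminus \{\zero\}$ attach the disjoint subsets $A^+(\x) = \{v_i : x_i = +1\}$, $A^-(\x) = \{v_i : x_i = -1\}$ and $A^0(\x) = \{v_i : x_i = 0\}$. Call $\x$ \emph{good} if the pair $(A^+(\x), A^-(\x))$ realises the splitting asserted by the theorem: both are independent sets of $P$, $|A^0(\x) \cap V_j| = 1$ for each $j$, the sizes of $A^+(\x)$ and $A^-(\x)$ differ by at most one, and $|A^+(\x) \cap V_j|, |A^-(\x) \cap V_j| \geq |V_j|/2 - 1$ for every $j$. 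Assuming no $\x$ is good, the plan is to construct an antipodal map $\lambda : \{-1, 0, +1\}^n \setminus \{\zero\} \to \{\pm 1, \ldots, \pm (n-1)\}$ with no complementary pair $\x \preceq \y$ satisfying $\lambda(\x) = -\lambda(\y)$, contradicting OTL.

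To define $\lambda$, scan $v_1, \ldots, v_n$ from left to right and record the smallest index $k(\x)$ at which the prefix $v_1, \ldots, v_{k(\x)}$ already forbids a good completion. The obstructions are stated \emph{one-sidedly}, with respect to either $A^+$ or $A^-$, so as to behave monotonically under the refinement relation $\preceq$. They include an adjacency violation where $v_i, v_{i+1}$ both belong to $A^+$ (respectively to $A^-$); a per-color over-representation where $|A^+ \cap V_j \cap \{v_1, \ldots, v_k\}|$ exceeds the maximum still compatible with a good completion of $V_j$ (respectively for $A^-$); and a global over-representation where $|A^+ \cap \{v_1, \ldots, v_k\}|$ exceeds the global tolerance (respectively for $A^-$). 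Set $\lambda(\x) = +k(\x)$ if the first obstruction is attributed to $A^+$ and $\lambda(\x) = -k(\x)$ otherwise. Antipodality is built in, since $\x \mapsto -\x$ swaps $A^+$ with $A^-$, preserves $k(\x)$, and reverses the attribution.

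The crux is a monotonicity lemma: if $\x \preceq \y$ then $A^+(\y) \supseteq A^+(\x)$ and $A^-(\y) \supseteq A^-(\x)$, so any one-sided over-representation or adjacency violation present in $\x$ is inherited by $\y$. Consequently, if $\lambda(\x) = +k$, then $\y$ has an obstruction attributed to $A^+$ at some position $\leq k$; in particular $k(\y) \leq k$, and if $k(\y) = k$ then $\lambda(\y) = +k$ also. This rules out $\lambda(\x) = -\lambda(\y)$, contradicting OTL and therefore producing a good $\x$. The main challenge is to calibrate the one-sided rankings so that (a) every non-good $\x$ genuinely has an obstruction at an index $k \leq n-1$, so that $\lambda$ takes values in $\{\pm 1, \ldots, \pm (n-1)\}$; and (b) the non-monotone requirement $|A^0 \cap V_j| = 1$ is nonetheless captured by the cumulative one-sided per-color over-representations, in conjunction with the independence obstructions, which together prevent any $\x$ from violating the removal count in some $V_j$ without triggering $\lambda$.
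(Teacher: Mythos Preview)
Your proposal is a sketch rather than a proof, and the gap you yourself flag at the end is genuine and not merely a calibration issue. The obstruction list you give---adjacency in $A^+$ (or $A^-$), per-color over-representation of $A^+$ (or $A^-$), global over-representation---is one-sided and upward-monotone in $\preceq$, which is what you want, but it simply fails to cover all non-good $\x$. Take $\x=(+,0,0,\ldots,0)$: no adjacency, no over-representation anywhere, yet $\x$ is certainly not good. For such $\x$ your $k(\x)$, and hence $\lambda(\x)$, is undefined. More generally, whenever the defect of $\x$ is ``too many zeros in some $V_j$'' (i.e.\ $|A^0(\x)\cap V_j|\geq 2$), neither side is over-represented and your scan detects nothing. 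Your closing sentence asserts that the per-color over-representations ``in conjunction with the independence obstructions'' will absorb this case, but there is no mechanism for that: under-representation of $A^+$ is not monotone upward, and it cannot be rewritten as over-representation of $A^-$ alone because the slack sits in $A^0$.

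The paper's proof sidesteps exactly this difficulty by \emph{not} trying to label every non-good $\x$ as bad. Instead it splits the label range in two. When $J(\x)\neq\varnothing$ (some color is over-represented, in the precise sense including the balanced $|V_j|/2$--$|V_j|/2$ case), it assigns a ``high'' label $\pm(t+j')$ with $j'=\max J(\x)$. When $J(\x)=\varnothing$ it assigns the ``low'' label $\pm\alt(\x)$, the length of the longest alternating subsequence of $\x$. The octahedral Tucker lemma is then not used to derive a direct contradiction but to force $t+m\geq n$, i.e.\ to produce some $\z$ with $J(\z)=\varnothing$ and $\alt(\z)\geq n-m$. From such a $\z$ one can pass to one with $\alt(\z)=|\z^+|+|\z^-|=n-m$, and then the alternation gives independence and the size balance for free, while $J(\z)=\varnothing$ together with the count $n-m$ forces exactly one zero per color and the required per-color bounds. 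The role of $\alt$ is the idea your scheme is missing: it furnishes a monotone, antipodal label for the ``under-filled'' sign vectors that your first-obstruction scan cannot handle.
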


 Theorem~\ref{main} implies in particular that the conjecture by Aharoni et al. is true: the equality $|S_1\cap V_j|+|S_2\cap V_j|= |V_j|-1$ holds for every $j$ and thus one of $S_1$ or $S_2$ satisfies the inequality strictly for at least $m/2$ indices $j$.

The Borsuk-Ulam theorem was originally used for proving a special case of their conjecture (Theorem 1.7 of their paper). Here, we use the octahedral Tucker lemma, which is a combinatorial version of the Borsuk-Ulam theorem. \\

Another result in combinatorics deals with the fair splitting of a path whose vertices are colored and is a consequence of the Borsuk-Ulam theorem: the splitting necklace theorem. Consider an open necklace of $n$ beads, each having a color $j\in[m]$. We denote by $a_j$ the number of beads of color $j$. A {\em fair $q$-splitting} is the partition of the beads into $q$ parts, each containing $\lfloor a_j/q\rfloor$ or $\lceil a_j/q\rceil$ beads of color $j$. The picturesque motivation is the division of the necklace among $q$ thieves after its robbery. A theorem of Alon~\cite{alon1987splitting} states that such a partition is always achievable with no more than $(q-1)m$ cuts when $a_j$ is divisible by $q$. Using a flow-based rounding argument, Alon, Moshkovitz, and Safra~\cite{alon2006algorithmic} were able to show that such a partition is achievable without this assumption. 
In a paper in which he proved the splitting necklace theorem when $q=2$ via the octahedral Tucker lemma, P{\'a}lv{\"o}lgyi~\cite{palvolgyi2009combinatorial} conjectured that for each $j$ such that $a_j$ is not divisible by $q$, it is possible to decide which thieves get $\lfloor a_j/q\rfloor$ and which get $\lceil a_j/q\rceil$ and to still have a fair $q$-splitting not requiring more than $(q-1)m$ cuts. The conjecture is known to be true when $q=2$ (see~\cite{palvolgyi2009combinatorial}), when $a_j\leq q$ for all $j$ (by a greedy assignment), and when $m=2$ (see~\cite{meunier2014simplotopal}).
With a simple trick inspired by the argument of Alon, Moshkovitz, and Safra, we show that the conjecture is true when the remainder in the euclidian division of $a_j$ by $q$ is $0$, $1$, or $q-1$ for all $j$. This result implies in particular the conjecture for $q=3$.

\section{Fair splitting by independent sets of a path}



\subsection{Proof}

The combinatorial counterpart of the Borsuk-Ulam theorem is Tucker's lemma. Our main tool is a special case of this counterpart when the triangulation is the first barycentric subdivision of the cross-polytope. It turns out that in this case, Tucker's lemma can be directly expressed in combinatorial terms. This kind of formulation goes back to Matou\v{s}ek~\cite{matouvsek2004combinatorial} and Ziegler~\cite{ziegler2002generalized}.

As in oriented matroid theory, we define $\preceq$ to be the following partial order on $\{+,-,0\}$: $$0\preceq +,\quad 0\preceq -,\quad+\mbox{ and }-\mbox{ are not comparable.}$$ We extend it for sign vectors by simply taking the product order: for $\x,\y\in\{+,-,0\}^n$, we have $\x\preceq\y$ if the following implication holds for every $i\in[n]$
$$x_i\neq 0\Longrightarrow x_i=y_i.$$ 


\begin{lemma}[Octahedral Tucker lemma]\label{lem:oct_tucker}
Let $s$ and $n$ be positive integers. If there exists a map $\lambda:\{+,-,0\}^n\setminus \{\zero\} \rightarrow\{\pm 1,\ldots,\pm s\}$
 satisfying $\lambda(-\x)=-\lambda(\x)$ for all $\x$ and $\lambda(\x)+\lambda(\y)\neq 0$ when $\x\preceq\y$, then $s\geq n$.
\end{lemma}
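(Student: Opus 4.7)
The plan is to deduce the octahedral Tucker lemma from the classical Tucker lemma on triangulated spheres, by identifying the poset $(\{+,-,0\}^n \setminus \{\zero\}, \preceq)$ with the face poset of the boundary of the $n$-dimensional cross-polytope. First, I would identify each nonzero sign vector $\x$ with the face $\operatorname{conv}\{\mathrm{sign}(x_i)\, e_i : x_i \neq 0\}$ of $\partial\diamond_n$, where $\diamond_n = \{\y \in \R^n : \sum_{i=1}^n |y_i| \leq 1\}$. Under this identification, the partial order $\preceq$ corresponds exactly to face inclusion, and the involution $\x \mapsto -\x$ corresponds to the geometric antipodal map on $\partial\diamond_n$.

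Next, I would invoke the first barycentric subdivision $\sd(\partial\diamond_n)$, whose vertex set consists of the nonempty faces of $\partial\diamond_n$, and in which a simplex is a chain of such faces under inclusion. Through the correspondence above, the vertex set of $\sd(\partial\diamond_n)$ is in antipodally equivariant bijection with $\{+,-,0\}^n \setminus \{\zero\}$, and any two vertices lying in a common simplex are comparable under $\preceq$. Consequently, the hypothesis that $\lambda(\x)+\lambda(\y)\neq 0$ whenever $\x \preceq \y$ translates exactly into the statement that the induced simplicial labeling has no complementary edge.

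Finally, applying the classical Tucker lemma to the centrally symmetric triangulation $\sd(\partial\diamond_n)$ of the $(n-1)$-sphere, with the antipodal, no-complementary-edge labeling $\lambda$, yields $s \geq n$. The main obstacle in this plan is verifying that the face-poset and antipodal identifications are genuinely compatible with the simplicial structure of $\sd(\partial\diamond_n)$, so that ``chain of faces'' on the geometric side matches ``comparable pair of sign vectors'' on the combinatorial side; this is a standard but essential geometric check for the cross-polytope, and underlies the formulations due to Matou\v{s}ek and Ziegler cited above.
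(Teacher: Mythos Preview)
The paper does not supply its own proof of this lemma; it is stated as a known tool, with the remark that it is ``a special case of [Tucker's lemma] when the triangulation is the first barycentric subdivision of the cross-polytope,'' together with references to Matou\v{s}ek and Ziegler. Your derivation is exactly this: identify nonzero sign vectors with faces of $\partial\diamond_n$, pass to the barycentric subdivision so that simplices correspond to $\preceq$-chains, and then invoke the classical Tucker lemma. So your proposal is correct and coincides with the approach the paper alludes to (but does not spell out).
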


In the proof, we use the following notations for $\x\in\{+,-,0\}^n$:
$$\x^+=\{i\in[n]\colon x_i=+\}\qquad\mbox{and}\qquad\x^-=\{i\in[n]\colon x_i=-\}.$$  Note that $\x\preceq\y$ if and only if simultaneously $\x^+\subseteq\y^+$ and $\x^-\subseteq\y^-$. We also use the notion of alternating sequences. A sequence of elements in $\{+,-,0\}^n$ is {\em alternating} if all terms are nonzero and any two consecutive terms are different. Given an $\x=(x_1,\ldots,x_n)\in\{+,-,0\}^n$, we denote by $\alt(\x)$ the maximum length of an alternating subsequence of $x_1,\ldots,x_n$.

\begin{proof}[Proof of Theorem~\ref{main}]
The proof consists in applying Lemma~\ref{lem:oct_tucker} on a map $\lambda$ we define now. Let $n$ be the number of vertices of $P$. Without loss of generality, we assume that the vertices of $P$ are $1,2,\ldots,n$ in this order when one goes from one endpoint to the other. In the definition of $\lambda$, we use the quantity $t=\max\big\{\alt(\x)\colon \x\in\{+,-,0\}^n\;\,\mbox{s.t.}\;J(\x)=\varnothing\big\}$, where $$J(\x)=\Big\{j\in[m]\colon |\x^+\cap V_j|=|\x^-\cap V_j|=|V_j|/2\quad\mbox{or}\quad\max(|\x^+\cap V_j|,|\x^-\cap V_j|)>|V_j|/2\Big\}.$$ Note that $t\geq 0$.

Consider a vector $\x\in\{+,-,0\}^n\setminus\{\zero\}$. We distinguish two cases.
In the case where $J(\x)\neq\varnothing$, we set  $\lambda(\x)=\pm (t+j')$, where $j'$ is the maximum element in $J(\x)$ and where the sign is defined as follows. When $|\x^+\cap V_{j'}|=|\x^-\cap V_{j'}|=|V_{j'}|/2$, the sign is $+$ if $\min(\x^+\cap V_{j'})<\min(\x^-\cap V_{j'})$ and $-$ otherwise. When $\max\big(|\x^+\cap V_{j'}|,|\x^-\cap V_{j'}|\big)>|V_{j'}|/2$, the sign is $+$ if $|\x^+\cap V_{j'}|>|V_{j'}|/2$, and $-$ otherwise.
In the case where $J(\x)=\varnothing$, we set $\lambda(\x)=\pm\alt(\x)$, where the sign is the first nonzero element of $\x$.

Let us check that the map $\lambda$ satisfies the condition of Lemma~\ref{lem:oct_tucker}. Consider $\x\in\{+,-,0\}^n\setminus\{\zero\}$. The relation $J(-\x)=J(\x)$ immediately implies $\lambda(-\x)=-\lambda(\x)$. Now, consider $\x,\y\in\{+,-,0\}^n\setminus\{\zero\}$ such that $\x\preceq\y$ and $|\lambda(\x)|=|\lambda(\y)|$. We cannot have simultaneously $J(\x)=\varnothing$ and $J(\y)\neq\varnothing$ since otherwise $|\lambda(\x)|\leq t$ and $|\lambda(\y)|>t$. Suppose first that $J(\x)\neq\varnothing$. Since $\x^+\subseteq\y^+$ and $\x^-\subseteq\y^-$, the signs of $\lambda(\x)$ and $\lambda(\y)$ are the same. Suppose now $J(\y)=\varnothing$. Then $J(\x)=\varnothing$. In this case we have $\alt(\x)=\alt(\y)$, and it is simple to check (and well-known) that the first nonzero elements of $\x$ and $\y$ are the same.

We can thus apply Lemma~\ref{lem:oct_tucker} with $s=t+m$. It gives $t+m\geq n$, which implies that there exists $\z'\in\{+,-,0\}^n$ such that $J(\z')=\varnothing$ and $\alt(\z')\geq n-m$, which in turn implies that there exists $\z\in\{+,-,0\}^n$ such that $J(\z)=\varnothing$ and $\alt(\z)=|\z^+|+|\z^-|=n-m$. 
Let $S_1=\z^+$ and $S_2=\z^-$. They are both independent sets of $P$ and their sizes differ by at most one. 
Because $J(\z)=\varnothing$, we have $|S_1\cap V_j|+|S_2\cap V_j|\leq |V_j|-1$ for all $j$. The fact that $|S_1|+|S_2|=n-m$ leads then to $|S_1\cap V_j|+|S_2\cap V_j|=|V_j|-1$ for all $j$. Now, using again $J(\z)=\varnothing$, we have each of $|S_1\cap V_j|$ and $|S_2\cap V_j|$ non-larger than $|V_j|/2$, which leads directly to the inequality of the statement.
\end{proof}

The proof shows that $S_1$ and $S_2$ alternate along the path $P$. Theorem~\ref{main} combined with this remark leads to the following corollary, 
which improves Theorem~1.8 in the aforementioned paper by Aharoni et al. 
In particular, if $m$ and the number of vertices of $P$ have the same parity, replacing ``path'' by ``cycle'' in the statement of Theorem~\ref{main} does not change the conclusion.

\begin{corollary}\label{cor}
Given an $n$-cycle $C$ whose vertex set is partitioned into $m$ subsets $V_1,\ldots,V_m$, there always exist two disjoint sets $S_1$ and $S_2$ covering all vertices but one in each $V_j$, with sizes differing by at most one, and satisfying for each $i\in\{1,2\}$
$$ |S_i\cap V_j| \geq  \frac{|V_j|} 2 -1\qquad\mbox{for all $j\in[m]$,}$$
where one of the $S_i$'s is an independent set of size $\left\lfloor \frac{n-m} 2\right\rfloor$ and the other induces at most $\left\lceil \frac{n-m} 2\right\rceil- \left\lfloor \frac{n-m} 2\right\rfloor$ edge. 
\end{corollary}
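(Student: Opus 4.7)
The plan is to reduce the corollary to Theorem~\ref{main} by cutting the cycle open. I would pick an arbitrary edge $e=\{v,w\}$ of the $n$-cycle $C$, remove it to obtain a path $P$ on the same vertex set with the same vertex partition $V_1,\ldots,V_m$, and apply Theorem~\ref{main} to $P$. This produces two disjoint independent sets $S_1,S_2$ of $P$ that cover all but one vertex in each $V_j$, have sizes differing by at most one, and satisfy $|S_i\cap V_j|\geq |V_j|/2-1$ for all $i,j$. As noted inside the proof of Theorem~\ref{main}, $|S_1|+|S_2|=n-m$, so the two sizes must be $\lfloor(n-m)/2\rfloor$ and $\lceil(n-m)/2\rceil$.

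The only edge of $C$ absent from $P$ is $e$, so the only way $S_1$ or $S_2$ can fail to be independent in $C$ is that both endpoints $v,w$ of $e$ belong to the same set $S_i$. Here the crucial ingredient is the remark made by the authors just after the proof of Theorem~\ref{main}: the sets $S_1=\z^+$ and $S_2=\z^-$ alternate along $P$. So I would argue that if both $v$ and $w$ lie in the same $S_i$, then the alternating sequence of nonzero entries of $\z$ along $P$ starts and ends in $S_i$, which forces $|S_i|=|S_{3-i}|+1$. In particular, $n-m$ is odd, $|S_i|=\lceil(n-m)/2\rceil$, the set $S_{3-i}$ remains an independent set of $C$ with size $\lfloor(n-m)/2\rfloor$, and $S_i$ induces exactly one edge of $C$, namely $e$.

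In the complementary case (either an endpoint of $e$ is among the $m$ removed vertices, or $v,w$ lie in different $S_i$'s), the edge $e$ is not contained in either $S_i$, so both $S_1,S_2$ remain independent sets of $C$ and I can freely relabel so that the smaller one plays the role of the independent set of size $\lfloor(n-m)/2\rfloor$. Combining the two cases, one of the sets is always an independent set of $C$ of size $\lfloor(n-m)/2\rfloor$, while the other, of size $\lceil(n-m)/2\rceil$, induces $0$ edges when $n-m$ is even and at most $1$ edge when $n-m$ is odd, which matches the bound $\lceil(n-m)/2\rceil-\lfloor(n-m)/2\rfloor$. The color-balance inequalities $|S_i\cap V_j|\geq|V_j|/2-1$ and the condition on removed vertices transfer verbatim from Theorem~\ref{main}.

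The main (and essentially only) obstacle is the parity bookkeeping that ties the presence of the closing edge to the case where $|S_i|\neq|S_{3-i}|$, ensuring the extra edge always lands in the larger of the two sets; once the alternation observation is invoked, everything else is inherited from the path version. Incidentally, this also explains the parenthetical remark that when $m$ and $n$ share the same parity (so $n-m$ is even) the cycle conclusion is identical to the path conclusion.
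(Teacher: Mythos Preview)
Your proof is correct and follows essentially the same approach as the paper: delete an edge, apply Theorem~\ref{main}, and use the alternation of $S_1$ and $S_2$ along $P$ to control what happens at the two endpoints. Your case split (according to whether both endpoints of $e$ lie in the same $S_i$) is logically equivalent to the paper's split by the parity of $n-m$, and your argument that this forces $|S_i|=|S_{3-i}|+1$ is exactly the parity bookkeeping the paper leaves implicit.
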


\begin{proof}
Remove an arbitrary edge of the cycle $C$ and apply Theorem~\ref{main} to the path $P$ we obtain this way. When $n-m$ is even, the fact that $S_1$ and $S_2$ alternate implies that there are independent sets of $C$ as well, with the same property as for the case of a path. When $n-m$ is odd, one of $S_1$ and $S_2$ is independent and of size $\left\lfloor\frac{n-m} 2\right\rfloor$ and the other is of size $\left\lceil \frac{n-m} 2\right\rceil$ and may contain the two endpoints of $P$, but the other properties are kept.
\end{proof}



\subsection{Extension to arbitrary numbers of independent sets}

We conjecture that Theorem~\ref{main} can be extended for any number of independent sets. In a graph, a subset of vertices is {\em $q$-stable} if no two of them are at distance less than $q$, where the distance is counted in terms of edges. In particular, the $2$-stable sets of a graph are precisely its independent sets.

\begin{conjecture}\label{conj}
Given a positive integer $q$ and a path $P$ whose vertex set is partitioned into $m$ subsets $V_1,\ldots,V_m$ of sizes at least $q-1$,
there always exist pairwise disjoint $q$-stable sets $S_1,\ldots,S_q$ covering all vertices but $q-1$ in each $V_j$, with sizes differing by at most one, and satisfying
$$ |S_i\cap V_j|\geq \left\lfloor \frac{|V_j|+1} q\right\rfloor-1$$ for all $i\in[q]$ and all $j\in[m]$.
\end{conjecture}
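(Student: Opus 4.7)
The plan is to imitate the proof of Theorem~\ref{main} with the octahedral Tucker lemma replaced by a $\Z_q$-equivariant counterpart, along the lines of the $\Z_q$-Tucker lemma due to Ziegler (valid for prime $q$). That lemma asserts, roughly, that any $\Z_q$-equivariant labeling $\lambda\colon(\Z_q\cup\{0\})^n\setminus\{\zero\}\to\Z_q\times[s]$ avoiding a prescribed ``rainbow'' pattern along $\preceq$-chains of length $q$ forces a lower bound on $s$ of order $n/(q-1)$. The target inequality $|S_i\cap V_j|\geq\lfloor(|V_j|+1)/q\rfloor-1$ is exactly what makes this bookkeeping match the quantity $n-m(q-1)$ that has to appear at the end.

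Given $\x\in(\Z_q\cup\{0\})^n$, I would encode the candidate partition by $S_i(\x)=\{v\in[n]\colon x_v=i\}$ for $i\in\Z_q$ and define $J(\x)\subseteq[m]$ to consist of those $j$ for which the profile $(|S_i(\x)\cap V_j|)_{i\in\Z_q}$ prevents the covering equality $\sum_i|S_i(\x)\cap V_j|=|V_j|-(q-1)$ from holding together with the balance bound; this generalizes ``$|\x^+\cap V_j|=|\x^-\cap V_j|=|V_j|/2$ or the max strictly exceeds $|V_j|/2$'' from the proof of Theorem~\ref{main}. Tie-breaking, e.g.\ by the smallest index in $V_{j'}$ for $j'=\max J(\x)$, should be chosen so that a canonical ``responsible color'' $c(\x)\in\Z_q$ is assigned $\Z_q$-equivariantly. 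When $J(\x)\ne\varnothing$, set $\lambda(\x)=(c(\x),\,t+j')$; when $J(\x)=\varnothing$, set $\lambda(\x)=(c_0(\x),\,\alt_q(\x))$, where $c_0(\x)$ is the first non-zero coordinate of $\x$ and $\alt_q(\x)$ is a generalization of $\alt$ measuring the maximum length of a subsequence of $x_1,\ldots,x_n$ whose underlying path positions carry a valid $q$-stable coloring.

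Verifying the forbidden-pattern condition mirrors the $q=2$ argument: monotonicity of $J$ under $\preceq$ keeps any chain with common second coordinate either entirely in the $J\ne\varnothing$ regime, where the canonical color is inherited along the chain, or entirely in the $J=\varnothing$ regime, where $c_0(\x)$ is inherited. In either case the first coordinates are all equal rather than pairwise distinct, so the rainbow pattern is avoided. Applying the $\Z_q$-Tucker lemma with $s=t+m$ would then yield some $\z$ with $J(\z)=\varnothing$ whose $\alt_q(\z)$ reaches the maximum, which unpacks to $q$-stable sets $S_1(\z),\ldots,S_q(\z)$ with the desired covering, balance, and lower-bound properties.

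The main obstacle is the correct design of $\alt_q$. In the $q=2$ case, strict sign alternation of the non-zero subsequence of $\x$ immediately forces independence of $\x^+$ and $\x^-$ on the path; for $q\ge 3$ the analogous condition must control pairwise distances between same-color positions along the original path, and this interacts non-trivially with both the partial order $\preceq$ and the $\Z_q$-action (in particular, monotonicity of $\alt_q$ under $\preceq$, together with compatibility with the cyclic color relabeling, must be preserved). A secondary obstacle is that the $\Z_q$-Tucker lemma is only known for prime $q$, so handling composite $q$ would require either a new combinatorial tool or a reduction along the lines of the product constructions that appear in the necklace-splitting literature.
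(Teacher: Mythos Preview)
The statement you are trying to prove is stated in the paper as a \emph{conjecture}, and the paper does not prove it in general. What the paper does prove is (i) the case $q=2$ (this is Theorem~\ref{main}), and (ii) a multiplicativity result (Proposition~\ref{prop}): if the conjecture holds for $q'$ and for $q''$, then it holds for $q'q''$. Combining these gives the conjecture for all powers of two; the paper explicitly says that the remaining cases are open. In particular, there is no ``paper's own proof'' to compare against for general $q$.

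Your approach is therefore genuinely different from anything the paper does: rather than bootstrapping from $q=2$ via a product argument, you attempt a direct $\Z_q$-equivariant attack for each $q$. This is a natural idea, but as you yourself note, it is not a proof yet, and the obstacles you identify are real. The most serious one is the design of $\alt_q$ together with the verification that, on a $\preceq$-chain with common second coordinate, the first coordinates are forced to coincide. In the $q=2$ proof this step uses a specific and well-known fact: if $\x\preceq\y$ and $\alt(\x)=\alt(\y)$, then the first nonzero coordinates of $\x$ and $\y$ agree. You would need an analogue for your $\alt_q$, and no candidate definition is given. A second gap is in the $J(\x)\neq\varnothing$ regime: the paper does \emph{not} use ``monotonicity of $J$ under $\preceq$'' in the way you describe; it uses the equality $|\lambda(\x)|=|\lambda(\y)|$ to pin down a common index $j'$ and then compares cardinalities and minimum positions in $V_{j'}$. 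For $q\ge 3$ the tie configurations within a single $V_{j'}$ are far richer, and you have not specified a $\Z_q$-equivariant rule $c(\x)$ that is preserved along $\preceq$-chains. Finally, even if all of this were carried out, the $\Z_q$-Tucker lemma is available only for prime $q$, so at best you would obtain the conjecture for primes; to reach general $q$ you would still need a product step like the paper's Proposition~\ref{prop}.
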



 
This conjecture is obviously true for $q=1$ and Theorem~\ref{main} is the special case where $q=2$. The way Theorem~\ref{main} is written might suggest a lower bound of the form $|V_j|/q-1$, but there are simple counterexamples. Consider for instance a case with $q=3$ and $|V_1|=7$. If there were pairwise disjoint 3-stable sets $S_1,S_2,S_3$ covering all vertices of $V_1$ but two, with each $|S_i\cap V_1|$ of size at least $7/3-1=1.33$, we would have $|V_1|-2= |S_1\cap V_1|+|S_2\cap V_1|+|S_3\cap V_1|\geq 6$, a contradiction.

The independent sets $S_i$ of Theorem~\ref{main}  satisfy automatically the additional inequality $|S_i\cap V_j|\leq |V_j|/2$ for all $j$, and it is actually used in the proof itself of that theorem. 
We believe that the stronger version of Conjecture~\ref{conj} with an upper bound of $|V_j|/q$ on $|S_i\cap V_j|$ for all $i$ and $j$ is true.

Since Conjecture~\ref{conj} is true for $q\in\{1,2\}$, the following proposition implies that it is true for any power of two. The other cases remain open.

\begin{proposition}\label{prop}
If Conjecture~\ref{conj} holds for both $q'$ and $q''$, then it holds also for $q=q'q''$.
\end{proposition}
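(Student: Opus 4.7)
The natural strategy is a two-level composition. I would first apply Conjecture~\ref{conj} with parameter $q'$ to the path $P$ and the partition $V_1,\ldots,V_m$, which is valid since $|V_j|\geq q-1\geq q'-1$. This yields pairwise disjoint $q'$-stable sets $T_1,\ldots,T_{q'}$ of $P$, covering all vertices but $q'-1$ in each $V_j$, with sizes differing by at most one, and satisfying $|T_k\cap V_j|\geq \lfloor (|V_j|+1)/q'\rfloor -1$ for all $k$ and $j$. For each $k\in [q']$, I would then let $P_k$ be the path whose vertices are those of $T_k$ in the order inherited from $P$, and apply Conjecture~\ref{conj} with parameter $q''$ to $P_k$ and the partition $(V_j\cap T_k)_{j\in [m]}$. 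This second application is legal because $|V_j|+1\geq q'q''$ forces $\lfloor (|V_j|+1)/q'\rfloor\geq q''$, hence $|V_j\cap T_k|\geq q''-1$. It produces $q''$-stable sets $U_{k,1},\ldots,U_{k,q''}$ of $P_k$, covering all of $T_k$ except $q''-1$ vertices of each $V_j\cap T_k$.

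The candidate family for the conjecture at parameter $q$ is $\{U_{k,l}:(k,l)\in [q']\times[q'']\}$. Pairwise disjointness is immediate. For $q$-stability in $P$: two elements of a fixed $U_{k,l}$ at $P_k$-distance at least $q''$ are separated in $P$ by $q''$ consecutive-in-$T_k$ gaps, each of length at least $q'$ (since $T_k$ is $q'$-stable in $P$), so their distance in $P$ is at least $q'q''=q$. The missed vertices in each $V_j$ total $(q'-1)+q'(q''-1)=q-1$. The size lower bound follows from the identity $\lfloor \lfloor x/a\rfloor/b\rfloor=\lfloor x/(ab)\rfloor$ applied to $x=|V_j|+1$, giving
$|U_{k,l}\cap V_j|\geq \lfloor(|T_k\cap V_j|+1)/q''\rfloor-1\geq \lfloor\lfloor (|V_j|+1)/q'\rfloor/q''\rfloor-1=\lfloor (|V_j|+1)/q\rfloor-1.$

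The main subtlety I expect is verifying that the $q$ sets have sizes differing by at most one \emph{overall}, not just within each $T_k$. The $|T_k|$'s vary by at most one across $k\in[q']$, and within each $T_k$ the $|U_{k,l}|$'s also vary by at most one, but these two facts alone would only guarantee an overall spread of at most two. I would settle this by a short case analysis on the residue of $|T_k|-m(q''-1)$ modulo $q''$ (residue $0$; residue strictly between $0$ and $q''-1$; or residue $q''-1$): in each case, the two possible values of $|U_{k,l}|$ coming from the ``small'' $T_k$'s and those coming from the ``large'' $T_k$'s fit inside a common pair $\{\alpha,\alpha+1\}$, which closes the argument. Everything else is routine bookkeeping.
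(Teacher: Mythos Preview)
Your proposal is correct and follows essentially the same two-level composition as the paper: apply the conjecture for $q'$ to $P$ to get the $T_k$'s, then apply it for $q''$ to each induced path to get the $U_{k,l}$'s, and use the nested-floor identity for the lower bound on $|U_{k,l}\cap V_j|$.

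The one place where the paper is slicker is precisely your ``main subtlety.'' Instead of a case analysis on the residue of $|T_k|-m(q''-1)$ modulo $q''$, the paper simply applies \emph{both} the floor and the ceiling versions of the identity~\eqref{eq}: from
\[
\left\lfloor\tfrac{n-(q'-1)m}{q'}\right\rfloor\le |T_k|\le \left\lceil\tfrac{n-(q'-1)m}{q'}\right\rceil
\quad\text{and}\quad
\left\lfloor\tfrac{|T_k|-(q''-1)m}{q''}\right\rfloor\le |U_{k,l}|\le \left\lceil\tfrac{|T_k|-(q''-1)m}{q''}\right\rceil,
\]
one gets directly (after absorbing the integer $-(q''-1)m$ inside the inner floor/ceiling)
\[
\left\lfloor\tfrac{n-(q-1)m}{q}\right\rfloor\le |U_{k,l}|\le \left\lceil\tfrac{n-(q-1)m}{q}\right\rceil,
\]
which is the desired spread-at-most-one with no case split. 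Your case analysis reaches the same conclusion; it is just a bit more work than necessary.
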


The proof uses extensively the relations 
\begin{equation}\label{eq}
\left\lfloor\frac 1 c\left\lfloor \frac a b\right\rfloor\right\rfloor=\left\lfloor\frac{a}{bc}\right\rfloor\qquad\mbox{and}\qquad\left\lceil\frac 1 c{\left\lceil \frac a b\right\rceil}\right\rceil=\left\lceil\frac{a}{bc}\right\rceil
\end{equation} that hold for any $a,b,c\in\Z$ (actually, only $c\in\Z$ is required for these relations to hold).

Let us prove the left one. We have $\left\lfloor\frac 1 c\left\lfloor \frac a b\right\rfloor\right\rfloor\leq \frac 1 c\left\lfloor\frac a b\right\rfloor \leq\frac a {bc}$, and thus $\left\lfloor\frac 1 c\left\lfloor \frac a b\right\rfloor\right\rfloor\leq\left\lfloor\frac{a}{bc}\right\rfloor$. We also have $\frac a {bc}\geq \left\lfloor\frac{a}{bc}\right\rfloor$ and thus $\frac a {b}\geq c\left\lfloor\frac{a}{bc}\right\rfloor$, which implies since $c\in\Z$ that $\left\lfloor\frac a {b}\right\rfloor\geq c\left\lfloor\frac{a}{bc}\right\rfloor$. Therefore $\left\lfloor\frac 1 c\left\lfloor \frac a b\right\rfloor\right\rfloor\geq\left\lfloor\frac{a}{bc}\right\rfloor$. The right equality is proved similarly.

\begin{proof}[Proof of Proposition~\ref{prop}]

Consider a path $P$ whose vertices are partitioned into $m$ subsets $V_1,\ldots,V_m$ of sizes at least $q-1$. We assume that the conjecture is true for $q'$ and $q''$. We aim at proving that there exist pairwise disjoint $q$-stable sets $S_1,\ldots,S_q$ satisfying the conclusion of Conjecture~\ref{conj}. 

 Since the conjecture is assumed to be true for $q'$, there exist pairwise disjoint $q'$-stable sets $T_1,\ldots,T_{q'}$ of $P$, covering all vertices but $q'-1$ of each $V_j$ and such that for each $i\in[q']$
$$\begin{array}{l}
\ds{|T_i\cap V_j| \geq \left\lfloor  \frac{|V_j|+1} {q'} \right\rfloor-1}\quad\mbox{for all $j\in[m]$,}\quad\mbox{ and}\\ \\
\ds{\left\lceil \frac{n-(q'-1)m} {q'}\right\rceil \geq |T_i| \geq  \left\lfloor \frac{n-(q'-1)m} {q'}\right\rfloor,}
\end{array}$$ where $n$ is the number of vertices of $P$.
The conjecture being also assumed to be true for $q''$, we apply it on the path $Q_i$ whose vertices are the elements of $T_i$ (in the relative positions they have on $P$), for each $i\in[q']$. Note that $|T_i\cap V_j|\geq q''-1$ for every $j$. Therefore, for each $i$, there are pairwise disjoint $q''$-stable sets $S_{i1},\ldots,S_{iq''}$ of $Q_i$ covering all vertices but $q''-1$ of each $T_i\cap V_j$ and such that for each $k\in[q'']$ 
$$\begin{array}{l}
\ds{|S_{ik}\cap V_j| \geq \left\lfloor \frac{|T_i\cap V_j|+1} {q''} \right\rfloor-1}\quad\mbox{for all $j\in[m]$,}\quad\mbox{ and}\\ \\
\ds{\left\lceil \frac{|T_i|-(q''-1)m} {q''}\right\rceil \geq |S_{ik}| \geq  \left\lfloor \frac{|T_i|-(q''-1)m} {q''}\right\rfloor.}
\end{array}$$
Using the relation~\eqref{eq}, we get directly that each of the $q=q'q''$ subsets $S_{ik}$ satisfies
$$\begin{array}{l}
\ds{ |S_{ik}\cap V_j| \geq\left\lfloor \frac{|V_j|+1} q \right\rfloor-1}\quad\mbox{for all $j\in[m]$,}\quad\mbox{and}\\ \\
\ds{\left\lceil \frac{n-(q-1)m} q\right\rceil \geq |S_{ik}| \geq  \left\lfloor \frac{n-(q-1)m} q\right\rfloor.}
\end{array}$$
For each $V_j$, the number of uncovered vertices is exactly $q'-1+q'(q''-1)=q-1$. Moreover, each $T_i$ is $q'$-stable for $P$ and each $S_{ik}$ is $q''$-stable for $Q_i$. Thus each $S_{ik}$ is $q$-stable for $P$ and this finishes the proof.
\end{proof}

The proof of Proposition~\ref{prop} can be adapted in a straightforward way to get that it is also true for the aforementioned version of Conjecture~\ref{conj} with the upper bounds on the $|S_i\cap V_j|$'s. We get thus that this stronger conjecture is also true for any power of two. 

\section{Fair splitting of the necklace with advantages}

The result we prove in this section is the following one. We denote by $r_j$ the remainder of the euclidian division of $a_j$ by $q$.

\begin{theorem}\label{thm:neck}
When $r_j\in\{0,1,q-1\}$ for all $j$, then it is possible to choose for each $j$ such that $r_j\neq 0$, the thieves who get an additional bead of color $j$ and still have a fair $q$-splitting not requiring more than $(q-1)m$ cuts.
\end{theorem}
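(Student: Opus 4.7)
The plan is to derive Theorem~\ref{thm:neck} from the standard discrete splitting necklace theorem (Alon~\cite{alon1987splitting}, together with the continuous-to-discrete rounding of Alon, Moshkovitz, and Safra~\cite{alon2006algorithmic}) by extending the necklace with a tail of \emph{virtual beads}. For each color $j$ with $r_j\neq 0$, I would append $q-r_j$ virtual beads of color $j$ at the right end of the necklace: concretely, $q-1$ virtual beads when $r_j=1$ and a single virtual bead when $r_j=q-1$. In the resulting extended necklace, every color has a bead count divisible by $q$ (and the number of colors is unchanged), so the standard discrete splitting necklace theorem yields a fair $q$-splitting with at most $(q-1)m$ cuts in which each thief receives exactly $(a_j+q-r_j)/q=\lceil a_j/q\rceil$ beads of each color $j$ with $r_j\neq 0$, counting real and virtual beads together.

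The prescribed designation of advantaged versus disadvantaged thieves then translates into a prescribed \emph{distribution of virtual beads}: for every $j$ with $r_j\neq 0$, each disadvantaged thief must end up with exactly one virtual bead of color $j$ and each advantaged thief with none. The arithmetic is consistent because the number of disadvantaged thieves is precisely $q-r_j$, matching the number of virtual beads of color $j$. Once such a distribution is secured, discarding the virtual beads produces a fair $q$-splitting of the original necklace with the prescribed designation, and the total number of cuts is still at most $(q-1)m$ since the virtual beads lie beyond position $n$.

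The main obstacle, and the role of the ``simple trick'' alluded to in the paper, is to arrange that a fair splitting of the extended necklace actually realizes the desired distribution of virtual beads. Following the AMS scheme, my plan is to first produce a continuous fair splitting of the extended necklace (each thief receiving $\lceil a_j/q\rceil$ of each color $j$ with $r_j\neq 0$ in the continuous sense) and then adapt the flow-based rounding of Alon–Moshkovitz–Safra to additionally enforce the designation constraints. The hypothesis $r_j\in\{0,1,q-1\}$ enters crucially here: it implies that for each non-trivial color there is a \emph{single} distinguished thief (the unique advantaged thief when $r_j=1$, or the unique disadvantaged one when $r_j=q-1$), so the extra constraints to be imposed on the AMS flow are extremely mild and should admit a combinatorial Hall-type resolution. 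I expect the technical heart of the proof to consist of a careful ordering of the virtual beads within the tail, together with a small perturbation of the continuous cuts if needed, so that the AMS flow remains feasible under these single-thief-per-color constraints; the fact that the conjecture for $q=3$ falls within this regime is then immediate, since all nonzero remainders modulo $3$ are either $1$ or $q-1=2$.
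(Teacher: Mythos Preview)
Your virtual-bead reformulation is correct as far as it goes: appending $q-r_j$ dummy beads per color makes all totals divisible by $q$, and discarding the tail afterward cannot increase the cut count. The difficulty is that this repackaging does not remove the core obstruction; it only relocates it. After the continuous splitting of the extended necklace, the virtual beads may themselves be fractionally shared, and ``adapting the AMS rounding to enforce the designation constraints'' is precisely the statement to be proved. Your proposal stops at ``should admit a combinatorial Hall-type resolution'' and ``I expect the technical heart \dots'', which is exactly where the hypothesis $r_j\in\{0,1,q-1\}$ must do real work.

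The paper dispenses with virtual beads and works directly on the continuous fair splitting of the \emph{original} necklace. For each color $j$ it builds the bipartite thief--bead graph $G_j$ on the fractionally cut beads $B_j$, reduces to the acyclic case by a flow argument, and then proves two separate lemmas. For $r_j=q-1$ one gets $|B_j|=q-1$ and all $\alpha_{tj}=0$, and a short Hall computation shows that the $q-1$ beads can be matched to any prescribed $q-1$ thieves. For $r_j=1$, however, a plain Hall argument is not enough: the paper invokes the existence criterion for a $b$-factor (with $b(t)=\alpha_{tj}$, bumped by one for the designated thief, and $b(k)=1$ on bead-vertices), and the verification that every vertex subset spans enough edges genuinely uses the acyclicity of $G_j$. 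This is the ``simple trick'' you allude to, and it is not supplied by your outline. Note also the paper's concluding remark that the approach already fails for $q=4$, $r_j=2$, which confirms that the restriction on $r_j$ is not a mere convenience---any completion of your plan will have to reproduce an argument of comparable specificity.
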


The following corollary is an immediate consequence of the previous theorem (and answered positively what was identified as a first interesting question by P{\'a}lv{\"o}lgyi).
\begin{corollary}
When there are three thieves, it is possible to choose for each $j$ such that $r_j\neq 0$, the thief (if $r_j=1$) or the two thieves (if $r_j=2$) who get an additional bead and still have a fair $3$-splitting not requiring more than $2m$ cuts.
\end{corollary}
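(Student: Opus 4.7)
The plan is to reduce to Alon's exact splitting theorem (which guarantees a fair $q$-splitting with at most $(q-1)m$ cuts when every $a_j$ is divisible by $q$) by padding the necklace with phantom beads, and then to use an AMS-style rounding to place the phantoms with the prescribed thieves. Specifically, for each $j$ with $r_j = 1$, append $q-1$ beads of color $j$; for each $j$ with $r_j = q-1$, append one bead of color $j$. In the resulting augmented necklace $N'$, every color count $a_j'$ is divisible by $q$ and the number of colors is still $m$.

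Given the prescribed choices $t_j$ of the advantaged or disadvantaged thieves, the target is to distribute the phantoms as follows: for $r_j = 1$, the $q-1$ phantoms of color $j$ go one-each to the $q-1$ thieves other than $t_j$; for $r_j = q-1$, the single phantom of color $j$ goes to $t_j$. A direct check shows that under this distribution, deleting the phantoms recovers exactly the desired fair splitting of the original necklace. The crucial combinatorial feature of the restriction $r_j \in \{0, 1, q-1\}$ is that this pre-assignment sends at most one phantom of color $j$ to any single thief.

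The trick, inspired by Alon, Moshkovitz, and Safra, is then to apply the continuous splitting theorem (a consequence of Borsuk-Ulam) to $N'$, obtaining at most $(q-1)m$ continuous cuts and an assignment of intervals to thieves in which each thief gets exactly $a_j'/q$ measure of color $j$, and to round the continuous cuts to integer positions via a flow argument. The flow network has a node for each non-integer cut and conservation constraints at each thief corresponding to the integer bead counts. Imposing the prescribed phantom assignment as extra equality constraints (at most one unit of phantom of each color per thief) should preserve total unimodularity of the system, so an integer feasible solution exists provided the phantom block is laid out suitably at the end of $N'$. Such an integer flow yields a fair discrete $q$-splitting of $N'$ with at most $(q-1)m$ cuts in which each phantom lands with its pre-assigned thief; deleting the phantom region gives the claimed splitting of the original necklace.

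The main obstacle will be verifying that the phantom pre-assignment remains compatible with the AMS flow, i.e.\ that the flow network still admits a feasible integer solution once the phantom destinations are fixed. This requires a careful placement of the phantoms in $N'$ and a check that the fractional solution derived from the continuous splitting still lies in the modified polytope. The restriction $r_j \in \{0, 1, q-1\}$ is precisely what keeps the per-thief phantom demand of each color at most one, and this is exactly what the rounding machinery can enforce; a larger remainder would force multiple phantoms of the same color to a single thief, and the rounding would no longer be guaranteed to respect the prescribed assignment.
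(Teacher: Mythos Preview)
In the paper, this corollary is an immediate specialization of the preceding theorem: for $q=3$ every remainder $r_j$ lies in $\{0,1,2\}=\{0,1,q-1\}$, so the theorem applies verbatim. The actual work is in the proof of that theorem, which proceeds by taking a continuous fair $q$-splitting of the \emph{original} necklace (no padding), building for each color $j$ a bipartite graph $G_j$ between thieves and the beads of color $j$ that received a cut, arranging for $G_j$ to be acyclic, and then reassigning the cut beads via a $b$-factor argument when $r_j=1$ and via Hall's theorem when $r_j=q-1$.

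Your route---padding with phantom beads and rounding a continuous splitting of the augmented necklace $N'$ subject to prescribed phantom destinations---is genuinely different, but the step you yourself label ``the main obstacle'' is a real gap, not a routine verification. The continuous splitting theorem applied to $N'$ is a black box: it hands you \emph{some} splitting with at most $(q-1)m$ cuts, and that splitting may already assign the phantom beads integrally to the wrong thieves. Then the fractional point does not lie in your constrained polytope at all, and total unimodularity says nothing about an empty polytope. Concretely, take $q=3$, $m=1$, $a_1=4$, and ask for thief~$1$ to be advantaged. You append two phantoms, so $N'$ has six beads of one color; the continuous theorem may well return the discrete splitting in which thief~$2$ gets beads $1,2$, thief~$3$ gets beads $3,4$, and thief~$1$ gets beads $5,6$ (both phantoms). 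This already uses only two cuts, there is nothing to round, and your phantom constraints are violated. No ``careful placement'' of the phantom block helps, since you have no control over which continuous splitting the theorem produces.

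If instead you take the continuous splitting of the original necklace and then tack on the phantoms with their prescribed owners, the phantom part is already integral and the only rounding left is on the original beads---but that is precisely the paper's setup, and finishing from there still requires the per-color $b$-factor and Hall arguments rather than a single global TU claim.
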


It is proved by rounding in an appropriate way a fair splitting obtained by a continuous version of the splitting necklace theorem.

While in the splitting necklace theorem the cuts have to take place between the beads, this condition is relaxed in the ``continuous version'' of the splitting necklace theorem, also proved by Alon~\cite{alon1987splitting}. In this latter version, cuts are allowed to be located on beads, and  thieves may then receive fractions of beads. In this case, there always exists a {\em continuous fair $q$-splitting} for which each thief receives an amount of exactly $a_j/q$ beads of color $j$, with no more than $(q-1)m$ cuts.

Consider a continuous fair $q$-splitting and denote by $B_j$ the beads of color $j$ that are split between two or more thieves. Our result is obtained by showing that we can move the cuts located on beads in the $B_j$'s so that we reach a ``discrete'' fair $q$-splitting with the desired allocation. If $B_j=\varnothing$, then we already have a discrete fair splitting for the beads of color $j$. For each $j$ with $B_j\neq\varnothing$, we build a bipartite graph $G_j=(U_j,E_j)$ with the thieves on one side and the beads in $B_j$ on the other side. We put an edge between a thief $t$ and a bead $k$ if $t$ receives a part of $k$. For an edge $e=tk\in E_j$, let $u_e\in(0,1)$ be the amount of bead $k$ received by thief $t$. We have for all $k\in B_j$ and all $t\in[q]$ (we identify the thieves with the integers in $[q]$)
\begin{equation}\label{flow}
\sum_{e\in E_j}u_e=|B_j|,\quad\sum_{e\in\delta(k)}u_e=1,\quad\mbox{and}\quad\sum_{e\in\delta(t)}u_e=\alpha_{tj}+\frac {r_j} q\quad\mbox{for some integer $\alpha_{tj}\geq 0$,}
\end{equation} where $\delta(v)$ is the set of edges incident to a vertex $v$. Note that the degree of each thief-vertex $t$ in $G_j$ is at least $\alpha_{tj}+1$ and that the degree of each bead-vertex is at least $2$.

Changing the values of the $u_e$'s, while keeping them nonnegative and while satisfying the equalities~\eqref{flow}, leads to another continuous fair $q$-splitting with at most $(q-1)m$ cuts. The $u_e$'s form a flow. It is thus always possible to choose the continuous fair $q$-splitting in such a way that $G_j$ has no cycle for every $j$ (basic properties of flows). In the proofs below, $G_j$ will theferore always be assumed to be without cycle.
To get our result, we are going to select a subset $F$ of $E_j$ such that each bead-vertex is incident to exactly one edge in $F$. This subset of edges will encode an assignment of the beads in $B_j$ compatible with the already assigned beads (which does not increase the number of cuts), and leading to the desired allocation. 

For the proof of the case $r_j=1$, such a subset of edges is obtained as a special object of graph theory that we describe now.
Let $H=(V,E)$ be a bipartite graph and let $b:V\rightarrow\Z_+$. A {\em $b$-factor} is a subset $F\subseteq E$ such that each vertex $v\in V$ is incident to exactly $b(v)$ edges of $F$. There exists a $b$-factor if and only if each subset $X$ of $V$ spans at least $\sum_{v\in X}b(v)-\frac 1 2 \sum_{v\in V}b(v)$ edges, see~\cite[Corollary 21.4a]{schrijver2002combinatorial}.

\begin{lemma}\label{lem:r1}
When $r_j=1$, it is possible to move the cuts located on the beads of color $j$ and to get a discrete fair $q$-splitting for which we choose the thief getting the additional bead of color $j$. 
\end{lemma}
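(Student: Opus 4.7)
The plan is to reformulate the lemma as a $b$-factor problem in $G_j$. Denote by $t^*$ the thief we wish to give the extra bead of color $j$, and set
\[
b(t^*) = \alpha_{t^*j}+1,\qquad b(t) = \alpha_{tj}\text{ for }t\neq t^*,\qquad b(k)=1\text{ for every }k\in B_j.
\]
A $b$-factor $F\subseteq E_j$ is then precisely an assignment of each bead of $B_j$ to a single thief in which $t^*$ receives the extra bead; using it in place of the fractional allocation on $B_j$ produces a discrete fair $q$-splitting with the prescribed allocation, and because cuts are only reshuffled among beads of $B_j$, the total number of cuts remains at most $(q-1)m$.

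The first step is to pin down the structure of $G_j$. From $u_e\in(0,1)$ and $\sum_{e\in\delta(t)}u_e = \alpha_{tj}+1/q$, every thief has $d_{G_j}(t)\geq \alpha_{tj}+1$, and every bead has $d_{G_j}(k)\geq 2$ by definition of $B_j$. Inside any connected component $C$ of $G_j$, equating $\sum_{e\in E_C}u_e$ computed from the thief side and the bead side gives
\[
|B_C| \;=\; \sum_{t\in A_C}\alpha_{tj} + \frac{|A_C|}{q},
\]
so $|A_C|$ must be a positive multiple of $q$. Each thief has at least one incident edge since $\alpha_{tj}+1/q>0$, so $\sum_C |A_C|=q$ and $G_j$ has exactly one connected component. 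Being acyclic, $G_j$ is then a tree with $q+|B_j|-1$ edges, and summing the thief-degrees using $\sum_t\alpha_{tj}=|B_j|-1$ (from $r_j=1$) forces $d_{G_j}(t)=\alpha_{tj}+1$ for every thief.

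The heart of the argument is to exploit this tree structure. Root $G_j$ at $t^*$ and let $F$ be the set of edges joining each bead $k\in B_j$ to its parent in this rooted tree, which is necessarily a thief since the tree is bipartite and rooted at a thief. Each bead has exactly one incident edge in $F$; the root $t^*$ has no parent and is therefore incident in $F$ to all of its $d_{G_j}(t^*)=\alpha_{t^*j}+1$ bead-neighbors; every other thief $t$ is incident in $F$ to exactly its $d_{G_j}(t)-1=\alpha_{tj}$ bead-children. Hence $F$ is the desired $b$-factor, and the discrete assignment it encodes yields the lemma.

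The main obstacle is proving that $G_j$ is a single tree: the divisibility argument on $|A_C|$ is specific to $r_j=1$, and it is precisely what turns the general $b$-factor existence question into the explicit rooted-parent construction above. Once connectedness is in hand the rest of the proof is essentially bookkeeping, so one can also view this step as avoiding the need to invoke the general $b$-factor criterion of the preceding paragraph at all.
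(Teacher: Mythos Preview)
Your proof is correct and follows a genuinely different route from the paper's. Both arguments set up the same $b$-factor problem in $G_j$, but the paper then verifies the general existence criterion quoted before the lemma (each vertex subset $X$ must span at least $\sum_{v\in X}b(v)-\tfrac12\sum_v b(v)$ edges), using only acyclicity of $G_j$ to bound $|E[T:B_j\setminus K]|\leq |T|+|B_j|-|K|-1$. You instead prove the additional structural fact that $G_j$ is \emph{connected}: the component-wise balance $|B_C|=\sum_{t\in A_C}\alpha_{tj}+|A_C|/q$ forces $q\mid|A_C|$, and since every thief has positive fractional load and hence positive degree, $G_j$ is a single tree on $q+|B_j|$ vertices with every thief-degree exactly $\alpha_{tj}+1$; rooting at $t^*$ and taking each bead's parent edge then exhibits the $b$-factor explicitly. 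Your argument is more elementary and fully constructive, dispensing with the $b$-factor existence theorem altogether; the paper's argument is a bit shorter and does not need the connectedness step, which---as you correctly observe---is specific to $r_j=1$.
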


\begin{proof}
We have in this case $|B_j|=\sum_{t\in [q]}\alpha_{tj}+1$ (using~\eqref{flow}). For a thief $t$, define $b(t)=\alpha_{tj}$, except when $t$ is the thief chosen for the additional bead, in which case define $b(t)=\alpha_{tj}+1$. For each bead $k$, define $b(k)=1$. Consider a subset $X$ of vertices of $G_j$. Denote by $T$ the thief-vertices in $X$ and by $K$ the bead-vertices in $X$. The edges spanned by $X$ is $\delta(T)\setminus E[T:B_j\setminus K]$. We have $|\delta(T)|\geq\sum_{t\in T}(\alpha_{tj}+1)$ and $|E[T:B_j\setminus K]|\leq |T|+|B_j|-|K|-1$. To get this latter inequality, we use the fact that $G_j$ has no cycle. The number of edges spanned by $X$ is thus at least $|K|-|B_j|+1+\sum_{t\in T}\alpha_{tj}$. 

The quantity $\sum_{v\in X}b(v)-\frac 1 2 \sum_{v\in U_j}b(v)$ is at most $1+\sum_{t\in T}\alpha_{tj}+|K|-|B_j|$. According to the above mentioned result, there exists a $b$-factor in $G_j$.
\end{proof}

\begin{lemma}\label{lem:rq-1}
When $r_j=q-1$, it is possible to move the cuts located on the beads of color $j$ and to get a discrete fair $q$-splitting for which we choose the thief getting one bead of color $j$ less than the other thieves. 
\end{lemma}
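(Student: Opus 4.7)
The plan is to reuse the machinery of Lemma~\ref{lem:r1}: construct a $b$-factor on $G_j$ whose existence will encode a discrete assignment of the beads in $B_j$ in which the chosen thief $t_0$ receives one bead of color $j$ less than the other thieves. Given $t_0$, I would set $b(t_0)=\alpha_{t_0 j}$, $b(t)=\alpha_{tj}+1$ for $t\neq t_0$, and $b(k)=1$ for every $k\in B_j$. With $r_j=q-1$, relation~\eqref{flow} yields $|B_j|=\sum_t\alpha_{tj}+(q-1)$, so $\sum_t b(t)=|B_j|$ and $\frac 1 2\sum_v b(v)=|B_j|$. The task reduces to verifying the defect condition $e(X)\geq\sum_{v\in X}b(v)-|B_j|$ for every $X\subseteq U_j$.

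Writing $T=X\cap[q]$ and $K=X\cap B_j$, this right-hand side equals $\sum_{t\in T}\alpha_{tj}+|T|+|K|-|B_j|$ when $t_0\notin T$ and one unit less when $t_0\in T$. The thief-side bound from the proof of Lemma~\ref{lem:r1}, which only delivers $e(X)\geq\sum_{t\in T}\alpha_{tj}+|K|-|B_j|+1$, is too weak now since up to $q-1$ thieves carry the inflated demand $\alpha_{tj}+1$. The crucial step is to bound $e(X)$ from the bead side instead: every bead of $B_j$ is shared by at least two thieves, so $|\delta(K)|\geq 2|K|$, and since $G_j$ is a forest, $|E[[q]\setminus T:K]|\leq |K|+(q-|T|)-1$ whenever both parts are nonempty. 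Subtraction gives $e(X)\geq |T|+|K|-q+1$, and once this is plugged into the defect condition the two cases collapse to $\sum_{t\notin T}\alpha_{tj}\geq-1$, which is trivial. The degenerate situations ($K=\varnothing$, $T=[q]$, or one side of the forest bound empty) are handled directly from $\sum_t b(t)=|B_j|$ and $|\delta(K)|\geq 2|K|$.

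A resulting $b$-factor then designates, for each $k\in B_j$, the unique thief that will receive it entirely. Combined with the whole beads of color $j$ that each thief already owns outside $B_j$, this produces a discrete fair $q$-splitting with $\lfloor a_j/q\rfloor$ beads of color $j$ for $t_0$ and $\lceil a_j/q\rceil$ for every other thief, without introducing new cuts since no bead of $B_j$ remains split.

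The main obstacle is recognizing that the asymmetry between the two lemmas forces one to switch sides when bounding $e(X)$: in Lemma~\ref{lem:r1} a single thief is over-demanded and the thief-degree bound suffices, while in the present setting the over-demand is spread across $q-1$ thieves and the argument must instead exploit the symmetric structural fact that every bead of $B_j$ is split among at least two thieves.
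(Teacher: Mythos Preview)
Your argument is correct. The defect inequality does reduce, after your bead-side count $e(X)\geq|T|+|K|-q+1$ and the identity $|B_j|=\sum_t\alpha_{tj}+(q-1)$, to $\sum_{t\notin T}\alpha_{tj}\geq 0$ (when $t_0\notin T$) or $\geq -1$ (when $t_0\in T$), both trivial; and your handling of the boundary cases $K=\varnothing$ and $T=[q]$ is fine.

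The paper, however, proceeds differently and more briefly. It first observes that since $G_j$ is a forest on $q+|B_j|$ vertices with every bead-vertex of degree at least~$2$, one has $2|B_j|\leq |E_j|\leq q+|B_j|-1$, whence $|B_j|\leq q-1$; combined with $|B_j|=\sum_t\alpha_{tj}+(q-1)$ this forces $|B_j|=q-1$ and $\alpha_{tj}=0$ for every $t$. At that point the problem is simply to match the $q-1$ beads of $B_j$ to any prescribed set of $q-1$ thieves, and the paper checks Hall's condition directly from the fractional data: any set $T$ of thieves collects $\sum_{t\in T}\sum_{e\in\delta(t)}u_e=|T|(q-1)/q$ units from $B_j$, so $|N(T)|\geq\lceil |T|(q-1)/q\rceil=|T|$ for $|T|\leq q-1$. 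Your route keeps the full $b$-factor machinery of Lemma~\ref{lem:r1} and avoids the preliminary reduction $\alpha_{tj}=0$, at the price of a slightly longer verification; the paper's route exploits that reduction to fall back on plain Hall, and in addition uses the fractional weights $u_e$ (rather than just the degree bound) to certify Hall's condition. Both are valid; the paper's is shorter, while yours makes the parallel with Lemma~\ref{lem:r1} more explicit and highlights the thief/bead duality you identified.
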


\begin{proof}
Since $G_j$ is without cycle, its number of edges is at most $q-1+|B_j|$. The degree of each vertex in $B_j$ being at least $2$, it implies that $q-1+|B_j|\geq 2|B_j|$, and thus $|B_j|\leq q-1$. The fact that $r_j=q-1$ leads finally to $|B_j|=q-1$ (using~\eqref{flow}), which implies that $\alpha_{tj}=0$ for all $t\in[q]$. For any proper subset $T\subset [q]$ of distinct thieves, we have thus $\sum_{t\in T}\sum_{e\in\delta(t)}u_{e}=|T|-\frac {|T|}{q}$, which means that the size of the neighborhood of $T$ in $G_j$ is at least $\left\lceil|T|-\frac {|T|}{q}\right\rceil=|T|$. This latter equality holds because $|T|\leq q-1$. Hall's theorem ensures then that we can assign the $q-1$ beads in $B_j$ to any choice of $q-1$ thieves.
\end{proof}

\begin{proof}[Proof of Theorem~\ref{thm:neck}]
For the colors $j$ such that $r_j=0$, the original rounding procedure introduced by Alon, Moshkovitz, and Safra makes the job. For the colors $j$ such that $r_j\in\{1,q-1\}$, Lemmas~\ref{lem:r1} and~\ref{lem:rq-1} allow to conclude.
\end{proof}

Note that this approach may fail already when $q=4$ and $r_j=2$: if thieves $a$ and $b$ receive each half of a bead and thieves $c$ and $d$ receive each half of another bead, it is impossible to move the cuts so that both $a$ and $b$ are advantaged.

\subsection*{Acknowledgments} This work was done when the first author was visiting the Universit\'e Paris Est. He would like to acknowledge professor Fr\'ed\'eric Meunier for his generous support and hospitality.

\bibliographystyle{plain}
\bibliography{ColoredPath}

\end{document}